\newtheorem{Theorem}{Theorem}[section]
\newtheorem{definition}[Theorem]{Definition}
\newcommand{\ZZ}{\mathbb {Z}}
\newcommand{\RR}{\mathbb {R}}
\author{Eva Horvat}
\address{University of Ljubljana, Faculty of Education, Kardeljeva plo\v s\v cad 16, SI-1000 Ljubljana, Slovenia}
\email{eva.horvat@pef.uni-lj.si}
\begin{document}
\title{The label bracket $[ \cdot ]_{\mathcal{L}(\cdot )}$ for knotted trivalent graphs}

\maketitle

\begin{abstract}We generalize the construction from \cite{AM}, define the label bracket for knotted trivalent graphs in $\RR ^{3}$ and show it defines an isotopy invariant of such graphs.
\end{abstract}

\section{Introduction}

In \cite{AM}, the authors defined a combinatorial invariant of classical and virtual links which is called the \textit{label bracket}, and which generalizes several known link invariants. In this note, we generalize the idea of label bracket to obtain an invariant of knotted trivalent graphs. Our motivation is to provide a framework to study several open questions about the connection between the label bracket and the previously known quantum link invariants.\\ 

A \textit{knotted trivalent graph} is a regular trivalent graph embedded in $\RR ^{3}$. Two knotted trivalent graphs are called \textit{equivalent} (or \textit{ambient isotopic}) if there exists an isotopy of $\RR ^{3}$ that takes one onto the other. It is well-known that two knotted trivalent graphs are equivalent if and only if their diagrams are related by a finite sequence of the Reidemeister moves $\Omega _{1}-\Omega _{5}$, depicted in Fig. \ref{fig7} (for more details, we refer the reader to \cite{KA}). \\

\begin{figure}[h!]
\labellist
\normalsize \hair 2pt
\pinlabel $\Omega _{1}$ at 360 1020
\pinlabel $\Omega _{1}$ at 840 1020
\pinlabel $\Omega _{2}$ at 1820 1000
\pinlabel $\Omega _{5}$ at 360 600
\pinlabel $\Omega _{5}$ at 840 600
\pinlabel $\Omega _{3}$ at 1820 600
\pinlabel $\Omega _{4}$ at 590 230
\pinlabel $\Omega _{4}$ at 1825 230
\endlabellist
\begin{center}
\includegraphics[scale=0.18]{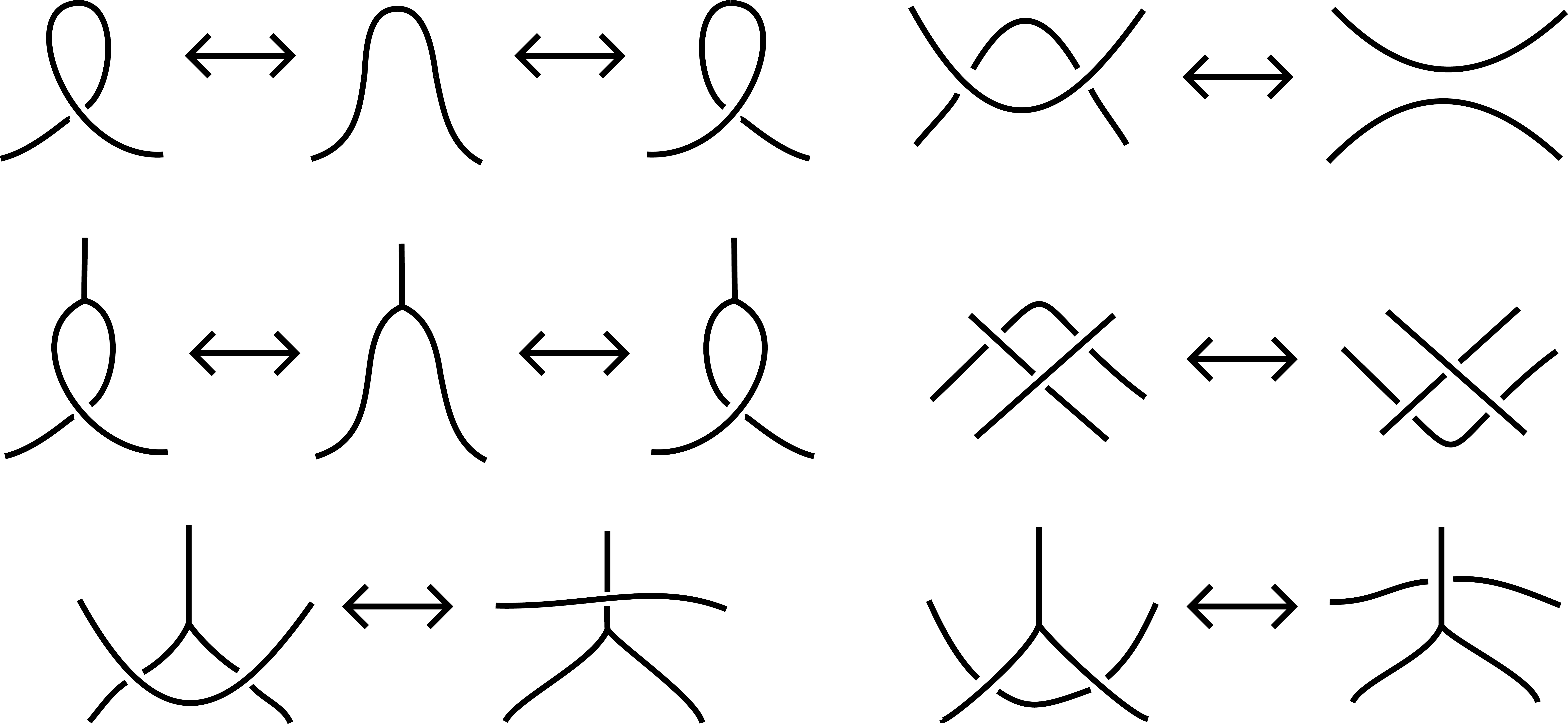}
\caption{Reidemeister moves $\Omega _{1}-\Omega _{5}$}
\label{fig7}
\end{center}
\end{figure}

The label bracket we define is an invariant of oriented trivalent graphs. We indicate orientation by adding a small arrow to every edge of the graph. We require that at any vertex, the arrows of the three incident edges either all point towards the vertex (the indegree equals 3), or all point away from the vertex (the indegree equals 0). 

\section{Generalization of the label bracket}

In this Section, we construct the label bracket for knotted trivalent graphs and show that it defines an isotopy invariant of such graphs. 

We generalize the notion of a \textit{classical label graph} from \cite{AM}. A \textit{classical label trigraph} will be a trivalent planar connected graph $G$, whose vertices are of types given in Fig. \ref{fig1}. 

\begin{figure}[h!]
\labellist
\normalsize \hair 2pt
\pinlabel $(V.1)$ at 125 -20
\pinlabel $(V.2)$ at 380 -20
\pinlabel $(V.3)$ at 635 -20
\pinlabel $(V.4)$ at 890 -20
\pinlabel $(V.5)$ at 1135 -20
\pinlabel $(V.6)$ at 1390 -20
\pinlabel $(V.7)$ at 1645 -20
\pinlabel $(V.8)$ at 1900 -20
\pinlabel $(V.9)$ at 2155 -20
\pinlabel $(V.10)$ at 2405 -20
\endlabellist
\begin{center}
\includegraphics[scale=0.18]{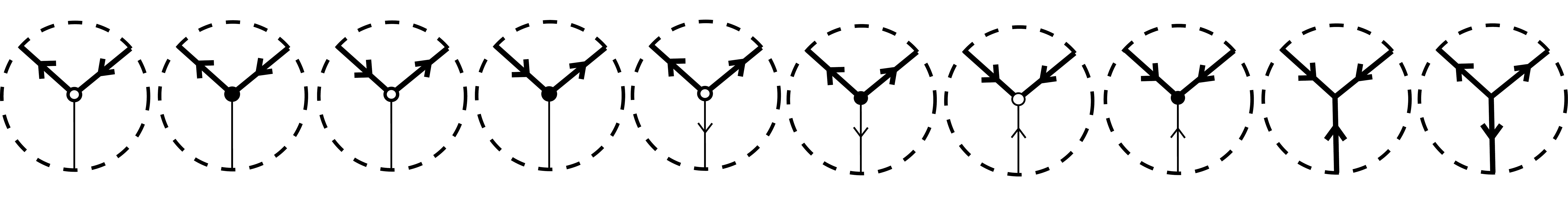}
\caption{Types of vertices of a classical label trigraph}
\label{fig1}
\end{center}
\end{figure}

Namely, each vertex is either unmarked (see types (V.9) and (V.10)) or it is marked by an empty or solid circle. An unmarked vertex is incident to three thick lines. A marked vertex is incident to one thin and two thick lines. All the edges are oriented except for thin edges that are adjacent to thick edges with coherent orientation. The indegree of the vertices of types (V.5), (V.6) and (V.10) equals 0, while the indegree of the vertices of types (V.7), (V.8) and (V.9) equals 3.   

\begin{figure}[h!]
\labellist
\normalsize \hair 2pt
\pinlabel $\mathcal{L}$ at 480 0
\pinlabel $\mathcal{L}$ at 1500 0
\pinlabel $\mathcal{L}$ at 480 490
\pinlabel $\mathcal{L}$ at 1500 490
\pinlabel $R_{S.1}\colon $ at -80 700
\pinlabel $R_{S.2}\colon $ at -80 210
\endlabellist
\begin{center}
\includegraphics[scale=0.20]{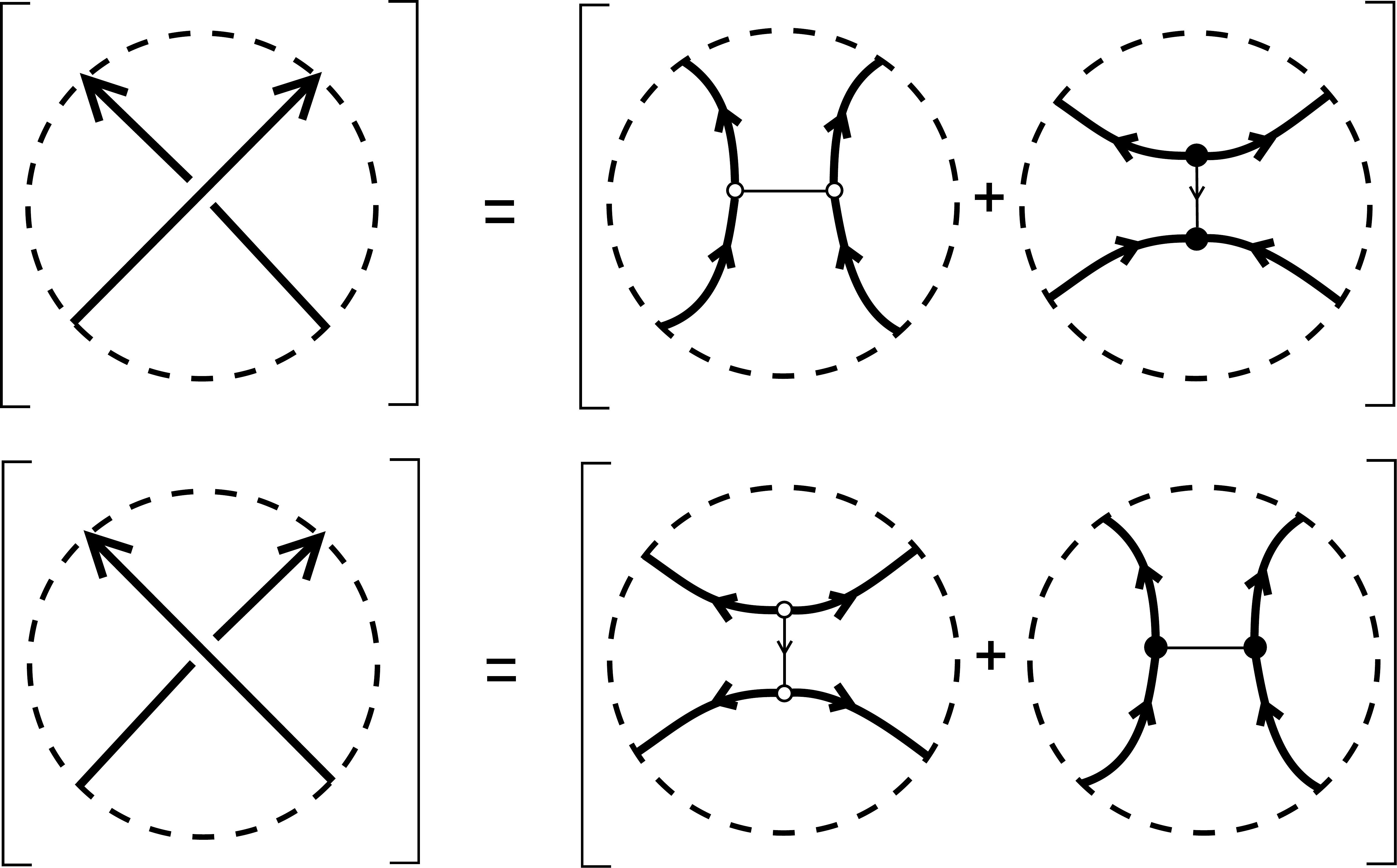}
\caption{Smoothing relations $R_{S.1}$ and $R_{S.2}$ of the label bracket, defined in \cite{AM}}
\label{fig6}
\end{center}
\end{figure}

Denote by $C(G)$ a $\ZZ $-module, generated by the classical label trigraphs modulo relations 
\begin{xalignat}{1}\label{rel}
& R_{1.1}, R_{1.2}, R_{2.j}, R_{3.1}\textrm{ (given in \cite{AM}), $R_{4.j}$ (Fig. \ref{fig2}) and $R_{5.j}$ (Fig. \ref{fig4}) for $j=1,2,3,4$.}
\end{xalignat} 
Let $D$ be a diagram of a knotted trivalent graph in $\RR ^{3}$. If $D$ has $n$ crossings, the smoothing relations in Fig. \ref{fig6} define $2^{n}$ states $G_{s}(D)\in C(D)$ of the diagram $D$. 

\begin{definition} The \textit{(extended) label bracket} of an oriented diagram $D$ is given by $$[D]_{\mathcal{L}}=\sum _{s=1}^{2^{n}}G_{s}(D)$$ modulo relations \eqref{rel}.
\end{definition}

In \cite{AM}, the authors have proved that the label bracket is invariant under the Reidemeister moves $\Omega _{1}$, $\Omega _{2}$ and $\Omega _{3}$, and thus represents an invariant of classical links. The extended label bracket provides a generalization of this result. 

\begin{figure}[h!]
\labellist
\normalsize \hair 2pt
\pinlabel $\mathcal{L}$ at 1715 1440
\pinlabel $\mathcal{L}$ at 2675 1440
\pinlabel $\mathcal{L}$ at 1715 960
\pinlabel $\mathcal{L}$ at 2675 960
\pinlabel $\mathcal{L}$ at 1715 480
\pinlabel $\mathcal{L}$ at 2675 480
\pinlabel $\mathcal{L}$ at 1715 0
\pinlabel $\mathcal{L}$ at 2675 0
\pinlabel $R_{4.1}\colon $ at -80 1660
\pinlabel $R_{4.2}\colon $ at -80 1180
\pinlabel $R_{4.3}\colon $ at -80 700
\pinlabel $R_{4.4}\colon $ at -80 230
\endlabellist
\begin{center}
\includegraphics[scale=0.18]{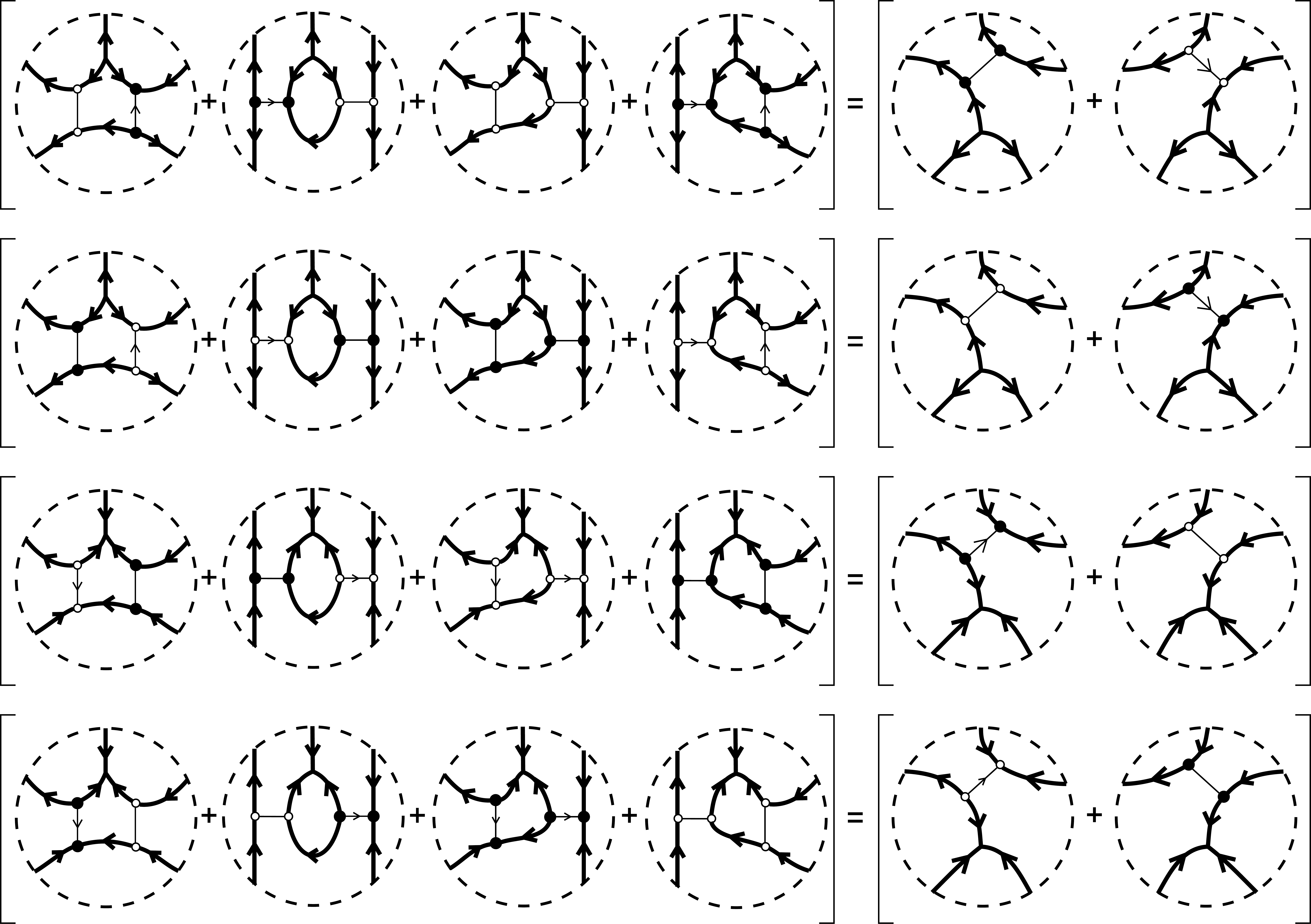}
\caption{Relations $R_{4.1}-R_{4.4}$ of the label bracket}
\label{fig2}
\end{center}
\end{figure}

\begin{figure}[h!]
\labellist
\normalsize \hair 2pt
\pinlabel $\mathcal{L}$ at 880 0
\pinlabel $\mathcal{L}$ at 1490 0
\pinlabel $\mathcal{L}$ at 2510 0
\pinlabel $\mathcal{L}$ at 880 470
\pinlabel $\mathcal{L}$ at 1490 470
\pinlabel $\mathcal{L}$ at 2510 470
\pinlabel $R_{5.1}$ at 940 730
\pinlabel $R_{5.2}$ at 1540 730
\pinlabel $R_{5.3}$ at 940 280
\pinlabel $R_{5.4}$ at 1540 290
\endlabellist
\begin{center}
\includegraphics[scale=0.18]{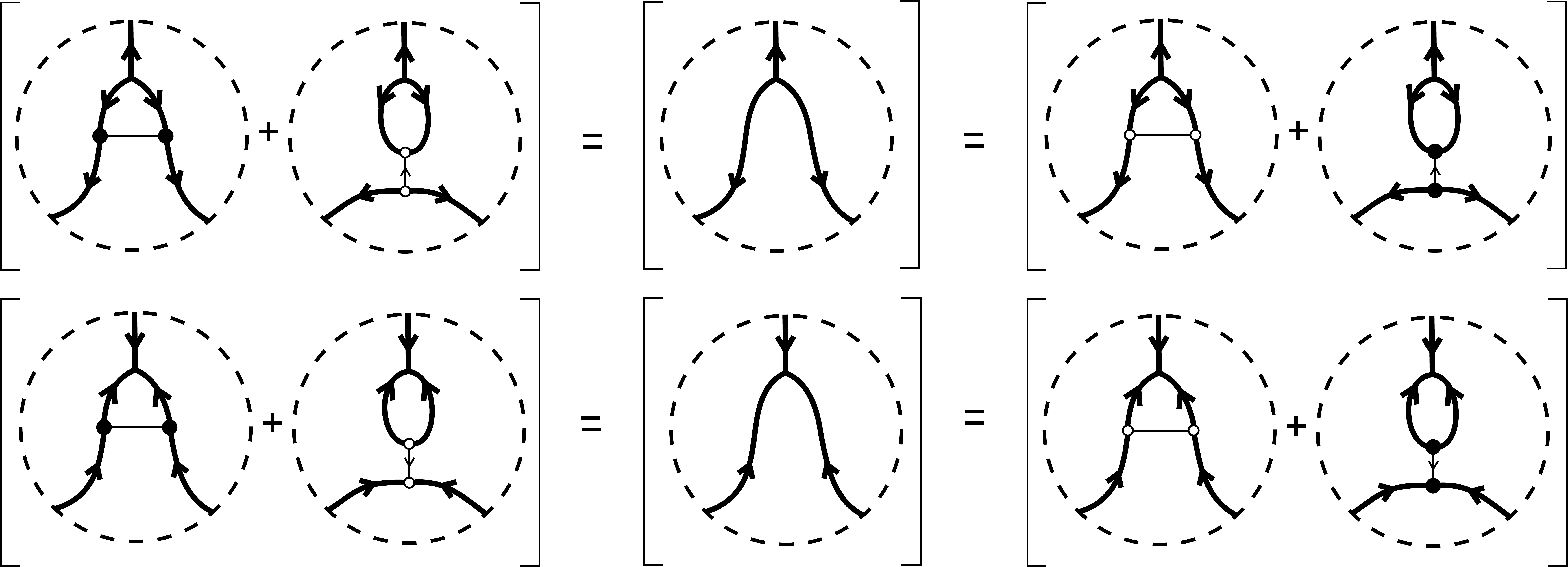}
\caption{Relations $R_{5.1}-R_{5.4}$ of the label bracket}
\label{fig4}
\end{center}
\end{figure}

\begin{Theorem} The (extended) label bracket is invariant under isotopy of knotted trivalent graphs.  
\end{Theorem}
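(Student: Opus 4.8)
The plan is to invoke the combinatorial description of equivalence recalled in the introduction: it suffices to prove that $[D]_{\mathcal{L}}$ is unchanged under each of the moves $\Omega_1,\dots,\Omega_5$. For $\Omega_1$, $\Omega_2$ and $\Omega_3$ this is already done in \cite{AM}: each of these moves is supported in a disk that meets the diagram only in thick strands and crossings, and the computation proving invariance --- expanding the smoothings $R_{S.1}$, $R_{S.2}$ and reducing by $R_{1.1}$, $R_{1.2}$, $R_{2.j}$, $R_{3.1}$ --- is entirely local and is not affected either by having enlarged the generating set with the trivalent vertex types $(V.9)$, $(V.10)$ or by having adjoined the further relations $R_{4.j}$, $R_{5.j}$. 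So the theorem reduces to invariance under $\Omega_4$ and $\Omega_5$.

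Fix such a move and let $D$, $D'$ be two diagrams that coincide outside a disk $B$ and inside $B$ exhibit its two sides; inside $B$ there is one trivalent vertex of the graph (of type $(V.9)$ or $(V.10)$) together with a few crossings. Apply $R_{S.1}$, $R_{S.2}$ to every crossing of $D$ and of $D'$ that lies in $B$. Since the two diagrams agree on the exterior of $B$, expanding both state sums and grouping the terms with a common exterior reduces the desired equality $[D]_{\mathcal{L}}=[D']_{\mathcal{L}}$ to a finite list of identities between local label trigraphs supported in $B$ --- one identity per admissible assignment of orientations and over/under information to the strands taking part in the move. The assignments are constrained, because the vertex inside $B$ has all three incident edges coherently oriented (indegree $0$ or $3$), which restricts the possible crossing signs. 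Each resulting local identity is, up to a planar isotopy of the trigraph and an application of the relations already available from \cite{AM}, exactly one of $R_{4.1},\dots,R_{4.4}$ of Fig. \ref{fig2}.

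The treatment of $\Omega_5$ is identical in structure: localizing in $B$ and sorting by the exterior, the identities one is left to verify are precisely $R_{5.1},\dots,R_{5.4}$ of Fig. \ref{fig4}. It remains to record the two bookkeeping facts that make this argument valid: that the state modules of $D$ and of $D'$ are canonically the same $\ZZ$-module, so that the equality of state sums is meaningful; and that each occurrence of $\Omega_4$ and of $\Omega_5$ --- in all of its orientation and over/under variants and read in both directions --- is indeed subsumed by one of the displayed relations $R_{4.j}$ or $R_{5.j}$, possibly combined with a planar isotopy and the \cite{AM} relations.

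I expect this last point to be the main obstacle: it is a finite but genuine case analysis, and one must be careful with the degenerate configurations --- for instance when a strand of the move closes up into a small curl, or when the strand being moved is itself one of the edges incident to the vertex in $B$ --- since these are the places where a missing case would most easily slip through.
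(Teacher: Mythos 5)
Your proposal follows essentially the same route as the paper: invariance under $\Omega_1$--$\Omega_3$ is quoted from \cite{AM}, and invariance under $\Omega_4$ and $\Omega_5$ is reduced to a local verification that the expanded state sums on the two sides of the move agree modulo the relations $R_{4.1}$--$R_{4.4}$ and $R_{5.1}$--$R_{5.4}$, which is exactly the content of the paper's Figures \ref{fig3} and \ref{fig5}. The finite case analysis over orientation and crossing variants that you flag as the main remaining work is precisely what those figures carry out, so your argument matches the paper's proof in structure and substance.
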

\begin{proof} Invariance under moves $\Omega _{1}$, $\Omega _{2}$ and $\Omega _{3}$ is shown in \cite{AM}. Invariance under the move $\Omega _{4}$ follows from relations $R_{4.1}-R_{4.4}$, as shown in Fig. \ref{fig3}. Invariance under move $\Omega _{5}$ follows from relations $R_{5.1}-R_{5.4}$, as shown in Fig. \ref{fig5}.  
\end{proof}

\begin{figure}[h!]
\labellist
\normalsize \hair 2pt
\pinlabel $\mathcal{L}$ at 1725 0
\pinlabel $\mathcal{L}$ at 2685 0
\pinlabel $\mathcal{L}$ at 1725 460
\pinlabel $\mathcal{L}$ at 1200 460
\pinlabel $\mathcal{L}$ at 1725 940
\pinlabel $\mathcal{L}$ at 2685 940
\pinlabel $\mathcal{L}$ at 1725 1430
\pinlabel $\mathcal{L}$ at 1200 1430
\pinlabel $\mathcal{L}$ at 1725 1910
\pinlabel $\mathcal{L}$ at 2685 1910
\pinlabel $\mathcal{L}$ at 1725 2390
\pinlabel $\mathcal{L}$ at 1200 2390
\pinlabel $\mathcal{L}$ at 1725 2860
\pinlabel $\mathcal{L}$ at 2685 2860
\pinlabel $\mathcal{L}$ at 1725 3360
\pinlabel $\mathcal{L}$ at 1200 3360
\endlabellist
\begin{center}
\includegraphics[scale=0.18]{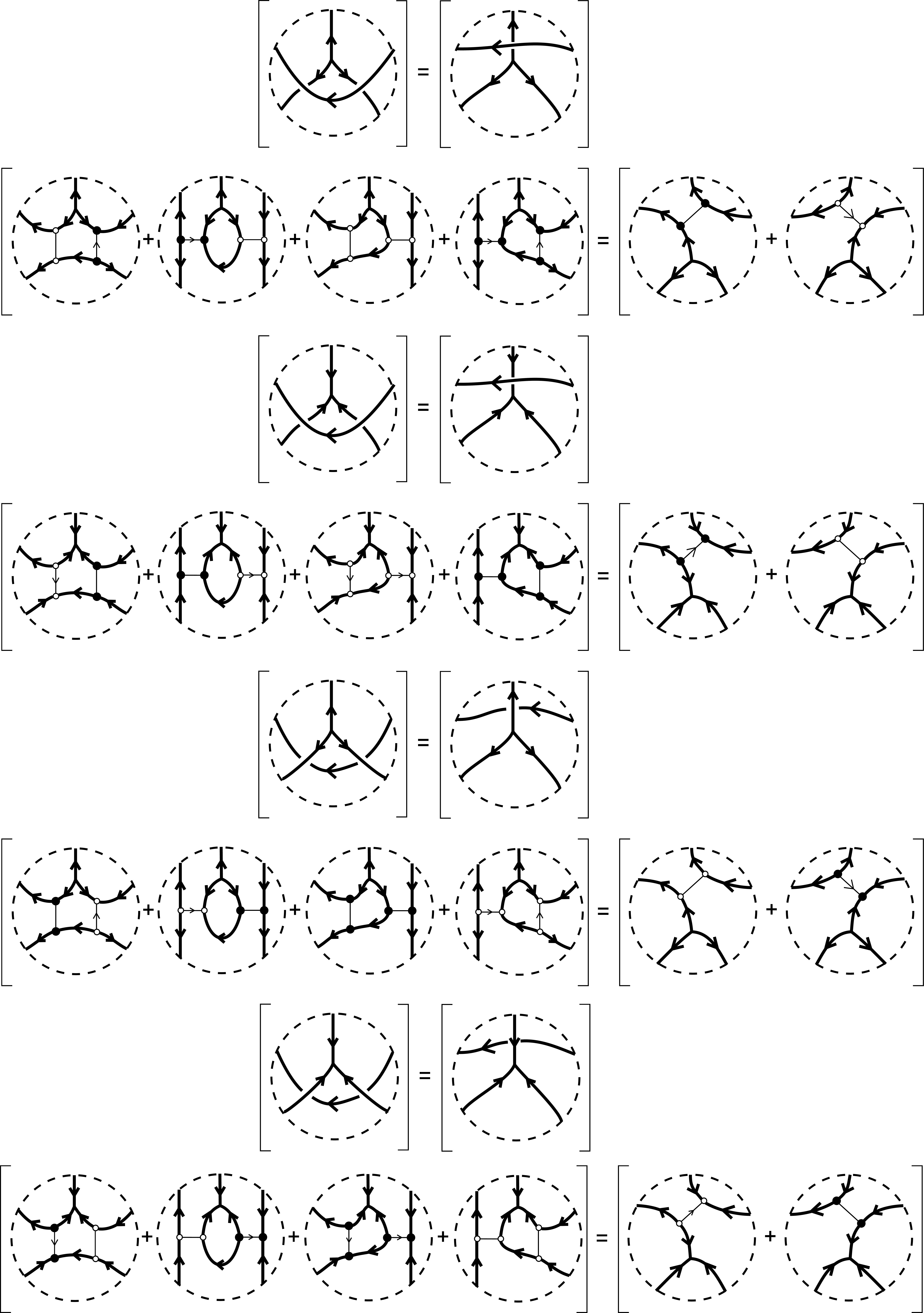}
\caption{Invariance of the label bracket under the move $\Omega _{4}$}
\label{fig3}
\end{center}
\end{figure}

\begin{figure}[h!]
\labellist
\normalsize \hair 2pt
\pinlabel $\mathcal{L}$ at 890 0
\pinlabel $\mathcal{L}$ at 1490 0
\pinlabel $\mathcal{L}$ at 2510 0
\pinlabel $\mathcal{L}$ at 890 960
\pinlabel $\mathcal{L}$ at 1490 960
\pinlabel $\mathcal{L}$ at 2510 960
\pinlabel $\mathcal{L}$ at 890 480
\pinlabel $\mathcal{L}$ at 1490 480
\pinlabel $\mathcal{L}$ at 2090 480
\pinlabel $\mathcal{L}$ at 890 1430
\pinlabel $\mathcal{L}$ at 1490 1430
\pinlabel $\mathcal{L}$ at 2090 1430
\endlabellist
\begin{center}
\includegraphics[scale=0.18]{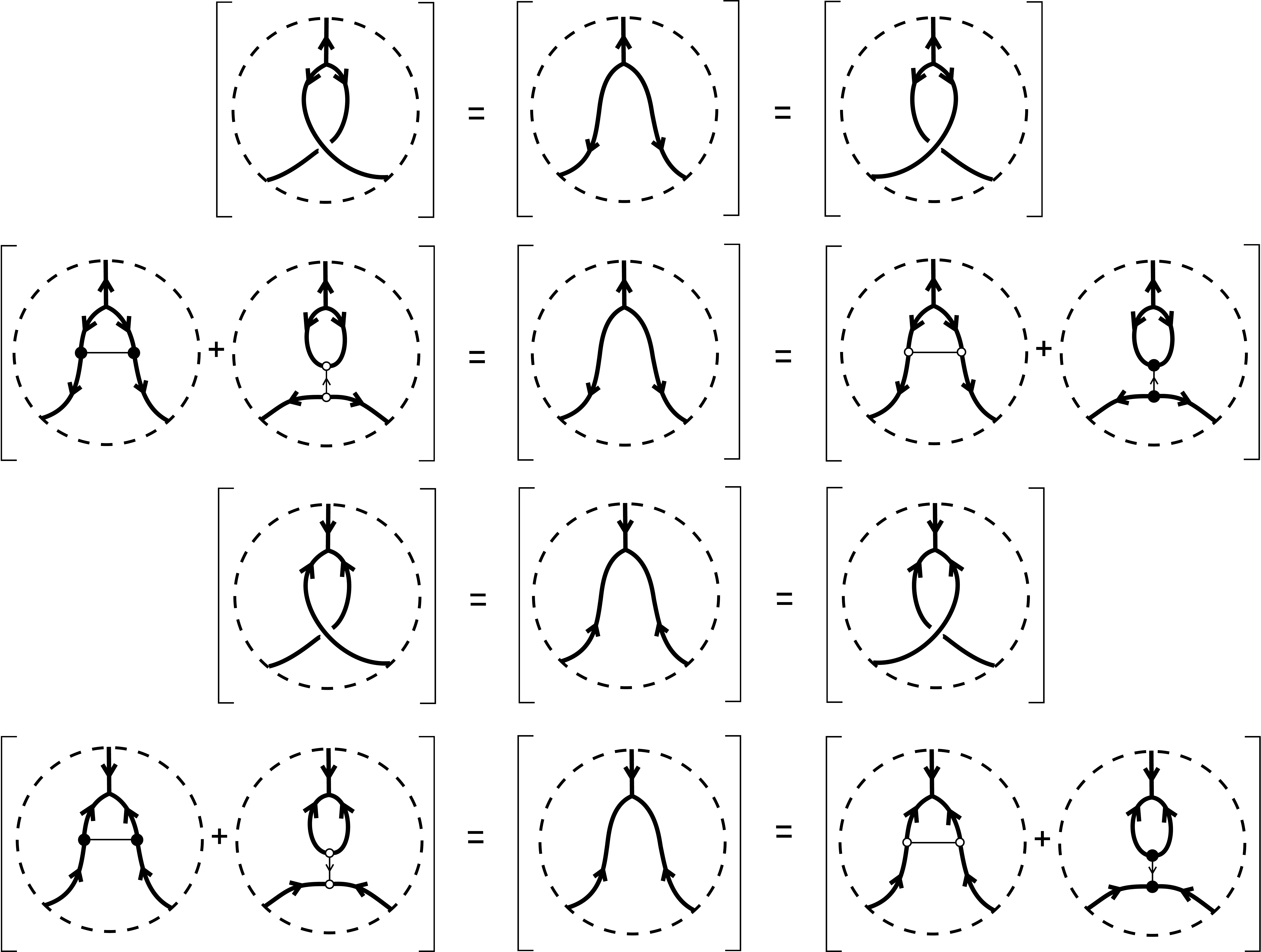}
\caption{Invariance of the label bracket under the move $\Omega _{5}$}
\label{fig5}
\end{center}
\end{figure}

\section*{Acknowledgments}

I would like to thank Vassily O. Manturov for providing me with valuable insight and pointing out the open question that triggered the writing of this paper. \\

The author was supported by the Slovenian Research Agency grant N1-0083.

\end{document}